\newtheorem{thm}{Theorem}
\newtheorem{prop}{Proposition}
\newtheorem{lem}{Lemma}
\newtheorem{cor}{Corollary}
\theoremstyle{remark}
\newtheorem{rem}{Remark}
\newtheorem{ex}{Example}
\newcommand{\Q}{\mathbb{ Q}}
\newcommand{\N}{\mathbb{ N}}
\newcommand{\R}{\mathbb{ R}}
\newcommand{\rk}{\operatorname{rk}}
\newcommand{\Z}{\mathbb{ Z}}
\DeclareMathOperator{\LL}{\mathfrak{L}}
\DeclareMathOperator{\im}{Im}
\DeclareMathOperator{\Ker}{Ker}
\title[Geometric formality of rationally elliptic manifolds]{Geometric formality of rationally elliptic manifolds in small dimensions}
\author{Svjetlana Terzi\'c}
\address{Faculty of Science, University of Montenegro, D\v zord\v za Va\v singtona bb, 81000 Podgorica,
Montenegro}
\address{Montenegrin Academy of Sciences and Arts, Rista Stijovi\'ca 5, 81000 Podgorica, Montenegro}
\email{sterzic@ac.me}
\begin{document}

\maketitle

\begin{abstract}
We classify  simply connected rationally elliptic manifolds of dimension five and those of dimension six with  small Betti numbers from the point of view of their rational cohomology structure. We also prove that a geometrically formal rationally elliptic six dimensional manifold, whose second Betti number is two, is rational cohomology $S^2\times {\mathbb C}P^2$. An infinite family of six-dimensional simply connected biquotients whose second Betti number is three, different from Totaro's biquotients, is  considered and it is proved that none of  biquotient from this family is geometrically formal.  
\end{abstract}
\vskip10mm

MSC:\; 53C25, 53C30

\section{Introduction}
Geometric formality of a compact smooth manifold $M$ is the notion introduced in~\cite{kot} and it is concerned with an existence of a Riemannian metric on $M$ such that the related harmonic forms form an algebra. Such a metric is called formal. Geometrically formal manifolds are formal in the sense of rational homotopy theory. Moreover, the original proof of the rational formality of symmetric spaces, which are one of the first non-trivial examples of such spaces, implicitly used the fact that the invariant metric on symmetric spaces is formal. But, it turns out that the notion of geometric formality is  much more restrictive. It is proved in~\cite{kot} that in dimension less or equal four, a geometrically formal manifold must have real cohomology of a symmetric space. Besides that, in~\cite{KT1} are provided a lot examples of homogeneous spaces which are rationally formal and which from cohomological reasons are not geometrically formal. 

Therefore, the investigation of geometric formality  in general, as well as for  some family of spaces or particular  examples remains unsolved, interesting and important problem for many applications in pure mathematics and mathematical physics. In the recent time as well in the focus of mathematical interest is the connection between  geometric formality
property and different differential geometrical properties of manifolds such as sectional and scalar curvature~\cite{K1},~\cite{B}.
 
As noted, this problem should be studied in the category of rationally formal spaces. Besides that, the Hodge theory gives that for geometrically formal spaces the algebra structure of the harmonic forms is the same as its real cohomology algebra structure. It suggests that one can hardly expect the positive answer to the question of geometric formality for the manifolds with many cohomology relations. In this paper, from these reasons,  we study rationally elliptic manifolds in the next two unsolved dimensions five and six.  For these manifolds  it is  known to comprise simply connected homogeneous spaces and biquotients in corresponding dimensions.

\section{General background}

\subsection{ Rational homotopy theory}

We refer to~\cite{FHT}  for a  comprehensive general reference for rational homotopy theory.

Let $(\mathcal{A},d_{\mathcal{A}})$ be a connected $(H^{0}(\mathcal{A}, d_{\mathcal{A}}) = k)$
and simply connected ($H^{1}(\mathcal{A}, d_{\mathcal{A}}) =0$) commutative $\N$-graded differential algebra over a field $k$ of characteristic zero. Let us consider the free $\N$-graded commutative differential algebra $(\wedge V, d)$ for a $\N$-graded vector space $V$ over $k$. It is said that
$(\wedge V, d)$ is a minimal model for $(\mathcal{A},d_{\mathcal{A}})$
if $d(V)\subset \wedge ^{\geq 2}V$ and there exists a morphism
$$
f : (\wedge V, d)\to (\mathcal{A}, d_{\mathcal{A}}) \ ,
$$
which induces an isomorphism in cohomology.

Let $X$ be a simply connected topological space of finite type.  The minimal model $\mu (X)$ for $X$ is defined to be to be the minimal model for the
algebra $\mathcal{A} _{PL}(X)$ of piece-wise linear forms on  $X$. One says that two simply connected manifolds have the same rational homotopy type if and only if there is a third space to which
they both map by maps inducing isomorphism in rational cohomology.   Then the following facts are well known. The minimal model $\mu (X)$ of a  simply connected topological space $X$ of finite type is unique up to isomorphism (which is well defined up to homotopy), it classifies the rational homotopy type of $X$ and,
furthermore, it contains complete information on the ranks of the homotopy groups of $X$. More precisely,
\begin{equation}\label{ranks}
\rk \pi _{r}(X) = \dim (\mu (X)/\mu ^{+}(X)\cdot \mu ^{+}(X))_{r}, \;
r\geq 2 \ ,
\end{equation}
where by $\mu ^{+}(X)$ we denote the elements in $\mu (X)$ of
positive degree and $\cdot$ is the usual product in $\mu (X)$.
One of the equivalent definition of formality is that $X$ is formal in the sense of Sullivan if its minimal model
coincides with the minimal model of its cohomology algebra $(H^{*}(X,\Q ), d=0)$ (up to isomorphism).

The procedure for minimal model construction is given, see~\cite{FHT}, through the proof of the theorem which  states the existence (and also the uniqueness up to isomorphism) of the minimal model for any such algebra. We briefly describe this procedure here, since we are going to apply it explicitly.

\subsubsection{ Procedure for minimal model construction.}\label{PR}
In the procedure for the construction  of the  minimal model for a simply connected commutative differential
$\N$-graded algebra $(\mathcal{A}, d)$ one starts by choosing
$\mu _{2}$ and
$m_{2} : (\mu _{2}, 0)\to (\mathcal{A}, d)$
such that
$m_{2}^{(2)} : \mu _{2}\to H^{2}(\mathcal{A}, d)$
is an isomorphism. In the inductive step, supposing that
$\mu_{k}$ and $m_{k} : (\mu _{k}, d)\to (\mathcal{A}, d)$
are constructed we extend it to $\mu _{k+1}$ and
$m_{k+1} : (\mu _{k+1}, d)\to (\mathcal{A}, d)$ with
\begin{equation}\label{MM}
\mu _{k+1}=\mu _{k}\otimes \mathcal{L}(u_{i},v_{j}) \ ,
\end{equation}
where $\mathcal{L}(u_{i},v_{j})$  denotes the vector space spanned by the elements
$u_{i}$ and $v_{j}$ corresponding to $y_{i}$ and $z_{j}$ respectively.
The latter are given by
\begin{equation}\label{M1}
H^{k+1}(\mathcal{A})=\Im m_{k}^{(k+1)}\oplus \mathcal{L}(y_{i})
\end{equation}
and
\begin{equation}\label{M2}
\Ker m_{k}^{(k+2)}=\mathcal{L}(z_{j}) \ .
\end{equation}
Then we have that $m_{k}(z_{j})=dw_{j}$ for some $w_{j}\in \mathcal{A}$ and the homomorphism $m_{k+1}$ is defined by
$m_{k+1}(u_{i})=y_{i}$, $m_{k+1}(v_{j})=w_{j}$ and $du_{i}=0$, $dv_{j}=z_{j}$.

\begin{rem}
In general, for a simply connected topological space $X$ we have that  $ \mathcal{A} = \mathcal{A}_{PL}(X)$ and, obviously, by~\eqref{ranks}, we see that   $\rk \pi _{k+1}(X)$ is the number of generators in the above procedure we add to $\mu _{k}(X)$, in order to obtain $\mu _{k+1}(X)$ .
\end{rem}
\begin{rem}\label{good}
For some spaces with special  cohomology one can easily compute
their minimal models. Namely, assume that  the rational cohomology algebra for $X$ is given by

$$
H^{*}(X,\Q )\cong \Q [x_1,\ldots ,x_n]/\langle P_1,\ldots ,P_k \rangle \ ,
$$
where the polynomials $P_1,\ldots ,P_k$ are without relations
in $\Q [x_1,\ldots ,x_n]$ meaning that $\langle P_1,\ldots ,P_k\rangle$ is a Borel ideal.
Then in~[2] it is proved that such a space $X$ is formal and its
minimal model is given by
$$
\mu (X)=\Q [x_1,\ldots ,x_n]\otimes \wedge (y_1,\ldots ,y_k) \ ,
$$
$$
dx_i = 0, \;\; dy_i =P_i \ .
$$
\end{rem}

Note that for a formal $X$, the algebra  $\mu (X)\otimes _{\Q}k$ coincides with the minimal model of the cohomology algebra $(H^{*}(X,k), d=0)$ for any field $k$ of characteristic zero. The converse is also true. If there exists a field $k$ of characteristic zero for which $\mu (X)\otimes _{\Q}k$ is the minimal model for the cohomology algebra $(H^{*}(X,k), d=0)$, then $X$ is formal. By the result of~\cite{MN}, all simply connected manifolds of dimension $\leq 7$ are formal in the sense of rational homotopy theory.

\begin{rem}
Obviously,~\eqref{ranks} implies that for the purpose of calculating the ranks of the homotopy groups of $X$ we can use $\mu (X)\otimes _{\Q}\R$ as well.
In the case of formal $X$ it means that we can apply the above procedure to
$H^{*}(X,\R )$.
\end{rem}

\subsubsection{ Rationally elliptic spaces.}  Suppose that $X$ is a simply connected topological space with rational homology of finite type. It is  said that $X$ is {\it rationally elliptic} if $\dim \pi _{*}(X)\otimes \Q $ is finite. 
The ranks of the homotopy groups of a rationally elliptic space $X$ of dimension $n$ satisfy~\cite{FHT}:
\begin{equation}\label{ranks-formulas}
\sum _{k}2k\rk \pi _{2k}(X) \leq n,\;\;\; \sum _{k}(2k+1)\rk \pi _{2k+1}(X)\leq 2n-1 .
\end{equation}
\begin{ex} 
For the spaces whose cohomology is given by Remark~\ref{good} we see that~\eqref{ranks} implies that they are rationally elliptic. Among the examples of such spaces are compact homogeneous spaces of positive Euler characteristic and biquotients of compact Lie groups.
\end{ex}

\subsection{Geometric formality}

The notion of geometric formality is introduced in~\cite{kot}. A smooth manifold $M$ is said to be {\it geometrically formal} if it admits Riemannian metric for which the wedge product of any two harmonic forms is again harmonic form. Recall~\cite{chern} that a differential form $\omega \in \Omega _{DR}(M)$ is harmonic if $\Delta \omega =0$, where $\Delta$ is the Laplace-de Rham operator on $\Omega _{DR}(M)$. Using Hodge theory it is proved~\cite{chern} that any harmonic form is closed and that no harmonic form is exact. Moreover any  real cohomology class for  $M$ contains  unique, up to constant, harmonic representative. It implies that manifolds which are rational homology spheres are trivially geometrically formal, they have just one, up to constant, harmonic form. The Hodge theory also implies that a geometrically  formal manifold is formal in the sense of rational homotopy theory. The vice versa is not true, it turns out that the notion of geometric formality is much more restrictive  then the  rational formality notion. 

The first non-trivial and up to now the widest class of examples  of geometrically formal manifolds are compact symmetric spaces~\cite{DFN}. They are as well one of the first examples of rationally formal spaces and the proof of their formality is based on the fact that on symmetric spaces harmonic forms related to an invariant metric form an algebra.    

 The non-involutive symmetries of higher order do not have any more such  properties. It is proved in~\cite{KT1}  that the generalized symmetric spaces are formal, while for most of them it is proved that they are not geometrically formal. The examples of non-symmetric geometrically formal homogeneous spaces that are not cohomology sphere are provided in~\cite{KT2}.

\section{ Rationally elliptic manifolds of dimension five of six}

We consider  rationally elliptic manifolds of dimension $5$
and $6$. In dimension $5$ we prove the following.

\begin{prop}
Let $M$ be a smooth, compact, simply connected five-dimensional rationally elliptic manifold. Then $M$ is  
rational cohomology sphere $S^5$ or it is rational cohomology product of spheres $S^2\times S^3$.
\end{prop}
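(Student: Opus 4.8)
The plan is to first pin down the shape of the rational cohomology of $M$ using Poincaré duality and then to exploit the ellipticity inequalities~\eqref{ranks-formulas}. Since $M$ is simply connected, compact and orientable of dimension five, we have $H^0(M,\Q)\cong H^5(M,\Q)\cong \Q$ and $H^1(M,\Q)=0$; Poincaré duality then forces $H^4(M,\Q)\cong H^1(M,\Q)=0$ and $b_2=b_3$. Thus the only unknown is $b_2$, and the whole statement reduces to bounding $b_2$ together with an identification of the ring structure in each admissible case. Throughout I will use that $M$ is formal, since by~\cite{MN} every simply connected manifold of dimension $\leq 7$ is formal; hence its rational homotopy type, and in particular its minimal model, is determined by the cohomology algebra $(H^*(M,\Q),d=0)$.

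The key numerical input is that, in the minimal model construction of~\secref{PR}, the degree-two generators are in bijection with a basis of $H^2$, so by~\eqref{ranks} we have $\rk\pi_2(M)=b_2$. The first inequality in~\eqref{ranks-formulas} with $n=5$ gives $2b_2\leq 5$, whence $b_2\leq 2$. If $b_2=0$ then $b_3=0$ and $M$ has the rational cohomology of $S^5$. If $b_2=1$, write $H^2=\langle x\rangle$; since $x^2\in H^4=0$ we get $x^2=0$, while $b_3=1$ gives $H^3=\langle y\rangle$ and the nondegenerate Poincaré pairing $H^2\otimes H^3\to H^5$ forces $xy\neq 0$. This is exactly the ring $\Q[x]/(x^2)\otimes\wedge(y)$ with $|x|=2$, $|y|=3$, i.e. the rational cohomology of $S^2\times S^3$.

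It remains to exclude $b_2=2$, which I expect to be the main point. Assuming $b_2=2$, set $H^2=\langle x_1,x_2\rangle$; since $H^4=0$, all three products $x_1^2,x_1x_2,x_2^2$ vanish in cohomology. Running the procedure of~\secref{PR}, I begin with $\mu_2=\Q[x_1,x_2]$; passing to degree three I must both add two closed generators to realize $H^3$ (which has dimension $b_3=2$), as dictated by~\eqref{M1}, and add three further generators $v_1,v_2,v_3$ with $dv_1=x_1^2$, $dv_2=x_1x_2$, $dv_3=x_2^2$ to kill the three nonzero degree-four classes of $\mu_2$, as dictated by~\eqref{M2}. These five generators span the degree-three part of the generating space, so $\rk\pi_3(M)=5$. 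But then the second inequality in~\eqref{ranks-formulas} would require $3\cdot 5\leq 2\cdot 5-1=9$, which is false. This contradiction rules out $b_2=2$ and completes the classification. The only delicate step is this last count: one must verify that the generators accounting for $H^3$ are genuinely distinct from those introduced to annihilate the products $x_ix_j$, which is clear since the former are closed and represent nontrivial cohomology whereas the latter have nonzero differential.
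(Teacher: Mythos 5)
Your proposal is correct and follows essentially the same route as the paper: Poincar\'e duality to reduce everything to $b_2$, the first ellipticity inequality to get $b_2\leq 2$, and then the minimal model count $\rk\pi_3=b_3+\dim\Sym^2(H^2)=5$ for $b_2=2$, contradicting $3\rk\pi_3\leq 9$. The paper just phrases this last step as the general formula $\rk\pi_3=b_2+b_2(b_2+1)/2$ before specializing, so there is no substantive difference.
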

\begin{proof}
Since $M$ is simply connected and rationally elliptic for its Betti numbers it holds that  
$b_{4}(M)=b_{1}(M)=0$ and $2b_{2}(M) + 4\rk \pi_{4}(M)\leq 5$ and $3\rk \pi _{3}(M)+5\rk \pi _{5}(M)\leq 9$. Therefore we must have
\begin{equation}\label{restr}
b_{2}\leq 2\;\; \text{and}\;\; \rk \pi _{3}(M)\leq 3.
\end{equation}

Now using described  procedure for minimal model construction we obtain that $\mu _{2} = H^{2}(M)$ and  $m_{2}^{(3)}: H^{3}(\mu _{2},\R )\to H^{3}(M,\R)$, $m_{2}^{(4)}: H^{4}(\mu _{2},\R )\to H^{4}(M, \R)$. Since $H^{3}(\mu _{2},\R ) =H^{4}(M, \R )=0$ it follows that $\im m_{2}^{(3)} = 0$ and $\Ker m_{2}^{(4)}=\mu _{2}^{4}$. 
It implies that $\mu _{3} = \mu _{2}\otimes \LL (u_i, v_j)$, where $u_i$ correspond to the basis $y_i$ in $H^{3}(M, \R)$, while $v_j$ correspond to the basis $z_j$ in $\mu _{2}^4$. The homomorphism $m_{3}$ is an extension of $m_{2}$ defined by $m_{3}(u_i)=y_i$ and $m_{3}(v_j)=0$.   It in particular gives that  
\begin{equation}\label{ranks}
\rk \pi _{3}(M) = b_{3}(M)+\dim \mu _{2}^{4} = b_{2}(M)+\frac{b_{2}(b_{2}+1)}{2} .
\end{equation}    
Together with~\eqref{restr} it implies   $b_2(M)=0$ or $b_{2}(M)=1$.

For  $b_{2}(M)=0$ it immediately follows  that $M$ is rational cohomology sphere $S^5$.

For $b_{2}(M)=1$, using Poincar\'e duality  
we deduce that $M$ is rational cohomology product of spheres $S^2\times S^3$. 
\end{proof}
Any  six-dimensional  simply-connected rationally elliptic space  $M$ satisfies  $b_{1}(M)=b_{5}(M)=0$ and    
\[
2b_{2}(M)+ 4\rk \pi_{4}(M)+6\rk \pi _{6}(M)\leq 6, \;\;
3 \rk \pi _{3}(M)+5\rk \pi _{5}(M)\leq 11.
\]
This implies that 
\begin{equation}\label{ranks6}
b_2\leq 3,\;\;\; \rk \pi _{4}(M)\leq 1,\;\;   \rk \pi_{3}(M)\leq 3.
\end{equation} 

When the second Betti number is one or zero we prove:
\begin{prop}
Let $M$ be a smooth, compact, simply-connected and rationally elliptic six-manifold whose second Betti number is less or equal one. Then $M$ is rational cohomology sphere $S^6$, the product of spheres $S^2\times S^4$ or $S^3\times S^3$, the complex projective space ${\mathbb C}P^3$.
\end{prop}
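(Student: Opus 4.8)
The plan is to pin down the full rational cohomology ring of $M$ by feeding Poincar\'e duality and the ellipticity inequalities~\eqref{ranks6} into the explicit minimal model procedure of~\secref{PR}, exactly as was done in the five-dimensional Proposition. Since $M$ is simply connected, oriented and six-dimensional, Poincar\'e duality already forces $b_0=b_6=1$, $b_1=b_5=0$, $b_2=b_4$, and, because the cup product is a nondegenerate skew-symmetric form on $H^3$, $b_3$ is even. As $M$ is formal (all simply connected manifolds of dimension $\leq 7$ are, by~\cite{MN}), I may run the construction of~\secref{PR} directly on $(H^{*}(M,\R),0)$ to read off the ranks $\rk\pi_k(M)$. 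I would then organize everything around $b_2\in\{0,1\}$, and in the case $b_2=1$ split further according to whether $x^2=0$ or $x^2\neq 0$ for a generator $x\in H^2$; this last dichotomy is precisely what will distinguish $S^2\times S^4$ from $\mathbb{C}P^3$.

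First, for $b_2=0$ there are no degree-two generators, so the degree-three generators are exactly a basis of $H^3$ and $\rk\pi_3(M)=b_3$. The bound $\rk\pi_3(M)\leq 3$ from~\eqref{ranks6}, together with the parity of $b_3$, yields $b_3\in\{0,2\}$. For $b_3=0$ the manifold is a rational homology $S^6$; for $b_3=2$, graded commutativity kills the squares of the two degree-three classes while Poincar\'e duality forces their product to generate $H^6$, so the ring is that of $S^3\times S^3$.

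Next, for $b_2=1$ I would start from $\mu_2=\Q[x]$ and examine the squaring map $m_2^{(4)}\colon \Q x^2\to H^4$, whose behaviour governs the whole case. If $x^2\neq 0$ the map is injective, no relation generator is introduced in degree three, and a short continuation of~\secref{PR} gives $\rk\pi_4(M)=b_3$; the bound $\rk\pi_4(M)\leq 1$ then forces $b_3=0$, and Poincar\'e duality applied to the nondegenerate pairing $H^2\times H^4\to H^6$ gives $x^3\neq 0$, so the ring is $\Q[x]/(x^4)$ and $M$ is rational cohomology $\mathbb{C}P^3$. If instead $x^2=0$, the squaring map vanishes and one must adjoin a degree-three generator $v$ with $dv=x^2$; continuing the procedure one stage further produces one degree-four generator from $H^4$ and $b_3$ further ones from the kernel in degree five, giving $\rk\pi_4(M)=1+b_3$. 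The bound $\rk\pi_4(M)\leq 1$ now forces $b_3=0$, and Poincar\'e duality identifies the surviving ring ($x^2=0$ with $H^4$ pairing nondegenerately against $x$) as that of $S^2\times S^4$. Collecting the four outcomes gives the asserted list.

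I expect the main obstacle to be the bookkeeping in the subcase $b_2=1$, $x^2=0$: once $v$ with $dv=x^2$ is adjoined, the class $x^2$ becomes exact in $\mu_3$ while $xv$ fails to be closed since $d(xv)=x^3\neq 0$, so one has to recompute $H^4(\mu_3)$ and $H^5(\mu_3)$ with care to arrive at the correct count $\rk\pi_4(M)=1+b_3$. Every other step reduces to the two linear inequalities of~\eqref{ranks6} combined with Poincar\'e duality and graded commutativity, after which it remains only to observe that each of the four cohomology rings is realized by the named model space.
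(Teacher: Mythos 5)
Your proposal is correct and follows essentially the same route as the paper: the minimal model procedure combined with the bounds of~\eqref{ranks6} and Poincar\'e duality, with the same four outcomes. The only organizational difference is that in the case $b_2=1$ you split on $x^2=0$ versus $x^2\neq 0$ before deriving $b_3=0$ (via $\rk\pi_4(M)=b_3$ or $1+b_3$), whereas the paper first rules out $b_3=2$ using the classes $xu_1,xu_2\in\Ker m_3^{(5)}$ and then splits; the underlying count is identical.
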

\begin{proof}

If $b_2=0$ then $b_{4}=0$ and  Hurewitz theorem gives that $\rk \pi_{3}=b_3$. Since, by Poincar\'e duality, $b_{3}(M)$ has to be even we might have $b_3=0$ or $b_3=2$. For $b_{3}(M)=0$ all Betti numbers $b_{i}(M)=0$, $1\leq i\leq  4$ are trivial what implies that $M$ is rational cohomology sphere $S^6$. For $b_{3}=2$ the Poincar\'e duality implies that   $M$ is rational cohomology product of spheres $S^3\times S^3$. 

For  $b_{2}=1$ we claim  that  $b_3=0$. First $\mu _{3}=\LL (x)\otimes \LL (u_i,v)$, where $x$ is the generator for $H^{2}(M,\R )$, then $u_i$ correspond to the basis for $H^{3}(M)$ and $v$ corresponds to the generator $z$  for $\Ker m_{2}^{4}$. Also the differential $d$ in $\mu _{3}$ is given by $d(u_i)=0$ and $d(v) = z$. It means that $\rk \pi _{3}(M)\geq b_{3}(M)$ implying $b_{3}(M)\leq 2$.  
For $b_{3}(M)=2$, since $b_{5}(M)=0$,  we would have that $\Ker m_{3}^{(5)} = H^{5}(\mu _{3}, d) = \LL (xu_1,xu_2)$ is two-dimensional what would imply that 
$\rk \pi _{4}(M)\geq 2$. This contradicts with~\eqref{ranks6}.

Thus, let $b_2=1$ and $x\in H^{2}(M,\R)$. If  $x^{2}=0$,
we  have in $H^{*}(M)$ a generator
of degree $4$ and thus in this case $M$ is rational cohomology 
$S^2\times S^4$.  If $x^{2}\neq 0$,  then 
$M$ is rational cohomology complex projective space ${\mathbb C} P^{3}$.
\end{proof}

\section{Geometric formality and rational ellipticity in dimension six} 
We consider in this section geometrically formal six-dimensional rationally elliptic manifolds whose second Betti number is $2$ or $3$. For those whose second Betti number is $2$ we obtain rational cohomology description, while among those whose second Betti number is $3$, we study  the class of homogeneous spaces and biquotients.  

\begin{thm}\label{main}
Let $M$ be a smooth, compact, simply connected rationally elliptic six-manifold such whose second Betti number is $2$. If $M$ is geometrically formal then
$M$ is rational cohomology $S^2\times {\mathbb C}P^2$.
\end{thm}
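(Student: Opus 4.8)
The plan is to first squeeze the rational cohomology ring out of rational ellipticity, reducing the theorem to a single algebraic invariant, and only then to invoke geometric formality to pin that invariant down. Since $b_2(M)=2$, the inequality $2b_2(M)+4\rk\pi_4(M)+6\rk\pi_6(M)\leq 6$ forces $\rk\pi_4(M)=\rk\pi_6(M)=0$. Running the minimal-model procedure exactly as in the two preceding propositions, with $\mu_2=\langle x_1,x_2\rangle$ in degree $2$ (so that $H^4(\mu_2)=\Sym^2\langle x_1,x_2\rangle$ is $3$-dimensional), the condition $\rk\pi_4(M)=0$ forces $m_3^{(4)}$ to be onto and $m_3^{(5)}$ to be injective. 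Surjectivity of $m_3^{(4)}$ together with Poincaré duality $b_4=b_2=2$ shows that there is exactly one quadratic relation among $x_1,x_2$ and that $H^4(M;\R)$ is spanned by the products of degree-two classes; injectivity of $m_3^{(5)}$ together with $b_5=0$ forces $b_3=0$. Hence $M$ has Betti numbers $(1,0,2,0,2,0,1)$, Euler characteristic $\chi(M)=6>0$, and $H^*(M;\R)$ is generated in degree $2$ and is a complete intersection $\R[x_1,x_2]/(Q,C)$ with a quadratic relation $Q$ (degree $4$) and a cubic relation $C$ (degree $6$).

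By the result of~\cite{MN} the manifold $M$ is formal, so this graded ring is a complete invariant of its rational homotopy type, and the whole problem becomes one of recognizing the ring. Up to $\GL(2,\R)$ the isomorphism type of $\R[x_1,x_2]/(Q,C)$ is governed by the triple cup-product cubic $P(s,t)=\langle (sx_1+tx_2)^3,[M]\rangle$ on $H^2(M;\R)$: by Macaulay (apolarity) duality $Q$ is the unique, up to scale, degree-two differential operator annihilating $P$, and a direct computation of the three binary-cubic types shows that $Q$ is a perfect square $Q=\ell^2$ if and only if $P$ has a repeated linear factor, while three distinct real roots (resp.\ a conjugate complex pair) give a definite (resp.\ indefinite) rank-two $Q$. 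The ring of $S^2\times{\mathbb C}P^2$ is exactly the repeated-root case: $Q=\ell^2$ is equivalent to the existence of a nonzero $a\in H^2(M;\R)$ with $a^2=0$. Thus it suffices to produce such a square-zero class; the complementary generator $b$ then has $b^2\neq 0$ and $b^3=0$, and one reads off $H^*(M;\R)\cong H^*(S^2\times{\mathbb C}P^2;\R)$.

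Now I bring in geometric formality. Fix a formal metric and let $\omega_1,\omega_2$ be the harmonic representatives of $x_1,x_2$; geometric formality makes every wedge product $\omega_i\wedge\omega_j$ and every power $(s\omega_1+t\omega_2)^{\wedge k}$ harmonic. Two standard Hodge-theoretic facts then apply. First, a harmonic form representing the zero class vanishes identically, so the cohomological relation lifts to the \emph{pointwise} identity $Q(\omega_1,\omega_2)=0$ in $\Lambda^4$. Second, for harmonic $\alpha,\beta$ the top form $\alpha\wedge *\beta=\langle\alpha,\beta\rangle\,d\mathrm{vol}$ is harmonic, hence a constant multiple of $d\mathrm{vol}$, so pointwise inner products of harmonic forms are constant; in particular $(s\omega_1+t\omega_2)^{\wedge 3}$ is a constant multiple of $d\mathrm{vol}$, meaning that the Pfaffian cubic $p\mapsto \mathrm{Pf}\!\left(s\omega_1(p)+t\omega_2(p)\right)$ on $\Lambda^2T_p^*M\cong\Lambda^2\R^6$ is \emph{independent of} $p$ and agrees with $P$ up to a positive constant. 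After rescaling I may take $\omega_1,\omega_2$ pointwise orthonormal. A useful packaging of these facts is that evaluation at a point, $\mathrm{ev}_p\colon H^*(M;\R)\cong\mathcal{H}^*(M)\hookrightarrow\Lambda^*T_p^*M$, is an injective homomorphism of graded algebras: it is an algebra map because wedging commutes with evaluation, and it is injective because it is nonzero on the top degree and $H^*(M;\R)$ satisfies Poincaré duality.

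It remains to show that this common Pfaffian cubic $P$ has a repeated root, equivalently that the $2$-plane $W_p=\langle\omega_1(p),\omega_2(p)\rangle\subset\Lambda^2\R^6$ contains a nonzero decomposable (rank-two) form, equivalently that the degree-four relation of the subalgebra $\mathrm{ev}_p\!\left(H^*(M;\R)\right)\subset\Lambda^*\R^6$ — a subalgebra with Hilbert series $1+2t^2+2t^4+t^6$ — is a perfect square. This is the crux and the main obstacle: I would work at a single point, analyzing a pair of orthonormal $2$-forms on $\R^6$ subject to the fixed quadratic relation $Q=0$ and carrying the prescribed constant Pfaffian cubic, and show through the skew normal-form (Pfaffian) calculus that a rank-two relation — whether definite, so that $P$ has three distinct real roots, or indefinite, so that $P$ has a conjugate complex pair — is incompatible with these constraints, while the block-diagonal computation already signals that no rank-two configuration closes up with a nonzero top product. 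Granting this, $Q=\ell^2$ delivers the desired square-zero class $a\in H^2(M;\R)$, and the theorem follows. I expect the definite case to be the most delicate, since there one must exploit the constancy of the harmonic norms (not merely the embedding $\mathrm{ev}_p$) to rule out a pointwise-nondegenerate pencil, whereas the indefinite case should already be obstructed by realizability inside $\Lambda^*\R^6$.
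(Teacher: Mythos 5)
Your first half matches the paper: rational ellipticity forces $\rk\pi_4(M)=0$, hence no degree-four generator, exactly one quadratic relation, and (via $\Ker m_3^{(5)}$) $b_3(M)=0$, so $H^*(M;\R)\cong\R[x_1,x_2]/(Q,C)$ with one relation in each of degrees $4$ and $6$. The reduction of the theorem to the statement ``$Q$ is a perfect square, equivalently some nonzero degree-two class has square zero'' is also the same dichotomy the paper uses (its case (a) versus case (b)).

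However, there is a genuine gap: the entire analytic content of the theorem is the exclusion of the case where no degree-two class squares to zero, and at exactly that point you write ``I would work at a single point \dots and show \dots'' and then ``Granting this''. Nothing in your text actually rules out a rank-two relation $Q$; you only assert that a normal-form computation should do it. This is not a routine verification one can wave at --- it is the step where geometric formality (as opposed to mere formality) enters, and the theorem is false without it. The paper closes this gap with a short concrete argument you should compare against your plan: if no generator squares to zero, one can always choose generators with $\bar x^2+\epsilon\bar y^2=0$ and $\bar y^3=0$ (the normalization uses that the real cubic $1-3a^2+\alpha(a^3-3a)$ has a root); then for the harmonic representatives $\omega_1,\omega_2$ the pointwise identity $\omega_2^3=0$ forces $\omega_2$ to be degenerate, so there is $v$ with $i_v\omega_2=0$; contracting $\omega_1^2=-\epsilon\,\omega_2^2$ gives $i_v(\omega_1)\wedge\omega_1=0$ and hence $i_v(\omega_1^3)=0$, contradicting the fact that $\omega_1^3$ represents the fundamental class and is therefore a nowhere-vanishing volume form. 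This is exactly the ``skew normal-form'' fact you were hoping for, but it must be proved, not granted; note also that it handles your ``definite'' and ``indefinite'' subcases uniformly, whereas your sketch treats them as separate open problems. (A minor secondary point: in the good case, after finding $a$ with $a^2=0$ you still need to replace the complementary generator $b$ by $b-\mu a$ to arrange $b^3=0$ before reading off the ring of $S^2\times{\mathbb C}P^2$.)
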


\begin{proof}

Let  $x$ and $y$ be the generators in
$H^{*}(M)$ of degree 2. By~\eqref{ranks6} we have that  $\rk \pi_{4}(M)=0$ what implies that there is no generator of degree four in the cohomology ring $H^{*}(M,\R )$.  Since $b_{2}(M)=b_{4}(M)$ it further gives that  there is  exactly one relation in $H^{*}(M)$ of degree 4. Therefore $\mu _{3}=H^{2}(M, \R )\otimes \LL (u_i,v)$, where $u_i$ correspond to the basis $y_i$ for $H^{3}(M)$ while $v$ correspond to the non-zero element $z$ from $\Ker m_{2}^{(4)}$.  We also have that $m_{3}(u_i)=y_i$, $m_{3}(v)=0$ and the differential $d$ in $\mu _{3}$ is given by $d(u_i)=0$, $d(v)=z$.  Thus   $\im m_{3}^{(4)} = H^{4}(M, \R)$ and $\Ker m_{3}^{(5)} = \LL (xu_i, yu_i)$, where $x,y$ are the generators for $H^{2}(M, \R )$  which means that  the dimension of $\Ker m_{3}^{(5)}$ is  $2b_{3}(M)$. It  gives that $\mu _{4}=\mu _{3}\otimes \LL (w_i)$, where $w_i$ correspond to the generous for $\Ker m_{3}^{(5)}$ and consequently  $\rk \pi _{4}(M) = 2b_{3}(M)$. Therefore we obtain $b_{3}(M)=0$. The elements $x^3, x^2y, xy^2, y^3$ are in $H^{6}(M,\R)$ which is one-dimensional. Two relations among these elements come form the relation in $H^{4}(M,\R )$ and, thus, me must have exactly one new relation in $H^{6}(M,\R )$.    

The following cases are possible.

a) There exist generator of degree $2$ whose square is zero. In this case Poincar\'e duality implies that $M$ has
cohomology of $S^2\times {\mathbb C}P^2$.

b) There is no  generator of degree $2$ whose square is zero. In this case we prove that  one can always find generators
${\bar x}$ and ${\bar y}$ for  $H^{2}(M)$ such that
\begin{equation}\label{gen-rel}
{\bar x}^2 + \epsilon{\bar y}^{2}=0\;\; \text{and}\;\; {\bar y}^{3}=0,\;\; \text{for}\;\;  \epsilon=\pm 1.
\end{equation}
It would thehn imply that  ${\bar x}^2y=0$ and ${\bar x}^3\neq 0$.

In the cohomology ring $H^{*}(M,\R )$ there are exactly two independent relations, one in  degree four and the other one in degree six. The relation in degree four is of the form $ax^2+bxy+cy^2=0$, where $a^2+b^2+c^2\neq 0$. We differentiate the two cases.

1) If $a=c=0$ this relation writes as $xy=0$. It implies that $x^2y = xy^2=0$ and $x^2$, $y^2$ are going to be Poincar\'e duals to $x$ and $y$ respectively what gives $x^3,y^3\neq 0$. Thus,the relation of degree six is of the form $y^3=ax^3$, $a\neq 0$. Put  $x_1=\sqrt[3]{a}x$ and consider the new generators for $H^{*}(M,\R )$ given by ${\bar x}= x_1+y$ and ${\bar y}=x_1-y$. Then ${\bar x}^2 -{\bar y}^2=0$ and ${\bar y}^3 = x_1^3-y^3=0$, satisfying~\eqref{gen-rel}.

2) If $a^2+c^2\neq 0$ let us, without loss of generality assume that $a\neq 0$. The relation of degree four writes as 
$x^2+bxy +cy^2=0$ for some new $b$ and $c$ what gives $(x+\frac{b}{2}y)^2 +(c-\frac{b^2}{4})y^2=0$.  Note that $c-\frac{b^2}{4}\neq 0$ since otherwise we would have the generator $x_1=x+\frac{b}{2}y$ whose square is zero. Thus,   $x_1$ and $y$ satisfy in degree four the relation $x_1^2+by^{2}=0$ for some new $b\neq 0$. If we further take $y_1=\sqrt{\left|b\right|}y$ we obtain the generators $x_1$ and $y_1$ for $H^{*}(M,\R )$ related in degree four by $x_1^2\pm y_1^2=0$. Without loss of generality assume that $x_1^2+y_1^2=0$. It implies $x_1^3=-x_1y_1^2$ and $y_1^3=-x_1^2y_1$. If $x_1^3=0$ or $y_1^3=0$ we take ${\bar x}$ and ${\bar y}$ to be $x_1$ and $x_2$, and~\eqref{gen-rel} will be  satisfied. If $x_1^3,y_1^3\neq 0$ we will show that for some $a\in \R$ the cube of  element  $x+ay$ has to be zero. Namely, taking into account relations between $x$ and $y$ we obtain
\[
(x+ay)^3=(1-3a^2)x^3+(a^3-3a)y^3.
\]
Further in this case the relation in degree six writes as $y^3=\alpha x^3$ what gives
\[
(x+ay)^3 = (1-3a^2+\alpha(a^3-3a))x^3.
\]
Therefore $(x+ay)^3=0$ if and only if $1-3a^2+\alpha (a^3-3a)=0$. The later one equation, being cube equation in $a$, always has at least one solution. Take ${\bar x}=ax-y$ and ${\bar y}=x+ay$. Then
\[
{\bar x}^2+{\bar y}^2 = (a^2+1)(x^2+y^2)=0,\;\; {\bar y}^3=0.
\]

We continue the proof. If we assume $M$ to be geometrically formal,  the relations ~\eqref{gen-rel} between the cohomology classes will be satisfied for the harmonic forms $\omega _1$ and $\omega _2$ representing $x$ and $y$. It would imply that the form $\omega _2$ has non trivial kernel meaning that there locally exists non-trivial vector field $v$ such that $i_{v}\omega _2=0$.  It would also follow that $\omega _1^3$ is a volume form on $M$.
But then from~\eqref{gen-rel} we deduce that $i_{v}(\omega _1^2)=0$  as well implying that $i_{v}(\omega _1)\omega _1=0$. Therefore
\[
i_{v}(\omega _1^3)=3i_{v}(\omega _1)\omega _1^2 =0
\]
what is in the contradiction with the fact that $\omega _1$ is a volume form on $M$. Thus such $M$ can not be geometrically formal.
\end{proof}

\begin{rem}  
Note that a  manifold $M$ which satisfies conditions of Theorem~\ref{main} and which is not rational cohomology $S^2\times {\mathbb C}P^2$ can not be geometrically formal for cohomological reasons. It implies that such $M$ may not have the cohomology of a  symmetric space.
\end{rem}

\subsection{ Homogeneous spaces and biquotients.}

It is known~\cite{FHT}  that homogeneous spaces and biquotinets of a compact Lie group are rationally elliptic. Together with Theorem~\ref{main} it implies:

\begin {cor}\label{hom-biq}
Simply connected six-dimensional homogeneous space or biquotient whose  second Betti number is two and which is not cohomology $S^2\times {\mathbb C}P^2$  can not geometrically formal.
\end{cor}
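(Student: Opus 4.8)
The plan is to derive this statement as an immediate consequence of \thmref{main}, the only additional ingredient being the rational ellipticity of the spaces in question. First I would recall the cited fact~\cite{FHT} that every simply connected homogeneous space $G/H$ and every biquotient of a compact Lie group is rationally elliptic. Thus if $M$ is a simply connected six-dimensional homogeneous space or biquotient with $b_2(M)=2$, then $M$ is automatically a smooth, compact, simply connected, rationally elliptic six-manifold whose second Betti number is two, so it satisfies every hypothesis of \thmref{main}.

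Next I would apply the contrapositive of \thmref{main}. That theorem asserts that geometric formality of such an $M$ forces $M$ to have the rational cohomology of $S^2\times \mathbb{C}P^2$. Hence, if $M$ is not rational cohomology $S^2\times \mathbb{C}P^2$, it cannot admit a formal metric, which is precisely the assertion of the corollary.

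Since all the substantive work --- the minimal-model computation yielding $b_3(M)=0$ and the interior-product argument that contradicts geometric formality in the case where no degree-two generator squares to zero --- has already been carried out in the proof of \thmref{main}, there is no serious obstacle remaining. The only point requiring attention is to confirm that the rational ellipticity hypothesis of the theorem holds for these spaces without any further assumption; this is supplied by the structure theory of homogeneous spaces and biquotients of compact Lie groups recorded in~\cite{FHT}. Consequently the corollary follows directly.
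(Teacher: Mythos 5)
Your proposal is correct and matches the paper's own reasoning exactly: the paper derives the corollary by noting (citing~\cite{FHT}) that homogeneous spaces and biquotients of compact Lie groups are rationally elliptic and then invoking Theorem~\ref{main}. Nothing further is needed.
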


Examples of spaces which satisfy the conditions in Corollary~\ref{hom-biq} are, among the others, flag manifold $SU(3)/T^2$ and  Eschenburg's biquotients~\cite{E}. Therefore these spaces are not geometrically formal. We want to note that  for the flag manifold and some of Eschenburg's biquotients  it is proved in~\cite{KT1} and~\cite{KT2}  that they are not geometrically formal. It is done treating separately  each of these examples by  studying their  cohomology  structure. By Corollary~\ref{hom-biq} we provide general proof for all of them.

\begin{lem}
Let $M$ be a simply connected rationally elliptic six-dimensional manifold . If $b_{2}(M)=3$ then $M$ has the rational homotopy groups of $S^2\times S^2\times S^2$.
\end{lem}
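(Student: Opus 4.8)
The plan is to run the same minimal-model machinery used in Theorem~\ref{main}, now pushing through the constraints imposed by $b_2(M)=3$. First I would set $\mu_2 = H^2(M,\R)$, which is three-dimensional with basis $x_1,x_2,x_3$, and compute $\mu_3 = \mu_2 \otimes \LL(u_i,v_j)$ exactly as before: the $u_i$ account for the basis of $H^3(M,\R)$, and the $v_j$ kill $\Ker m_2^{(4)}$, i.e.\ the relations in degree four. Since $\dim \Sym^2(\mu_2^2) = 6$ and $b_4(M) = b_2(M) = 3$ by Poincar\'e duality, there are exactly $6 - 3 = 3$ independent relations in degree four, so I get three generators $v_j$. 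The key numerical input from~\eqref{ranks6} is that $\rk\pi_4(M)\le 1$ and $\rk\pi_3(M)\le 3$; the strategy is to show these forces $b_3(M)=0$ and that the three quadratic relations cut $H^*(M,\R)$ down to exactly the cohomology of $S^2\times S^2\times S^2$, namely $\R[x_1,x_2,x_3]/\langle x_1^2,x_2^2,x_3^2\rangle$ up to a change of generators.

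Next I would bound $b_3(M)$. As in \thmref{main}, the kernel $\Ker m_3^{(5)}$ contains the products $x_i u_k$, giving $\dim\Ker m_3^{(5)} \ge 3\, b_3(M)$ (modulo whatever survives from the $v_j$-terms), and since the number of degree-four generators $w$ added at the next stage equals $\rk\pi_4(M)\le 1$, one forces $b_3(M)=0$. With $b_3=b_5=0$, the Poincar\'e polynomial is determined: $1 + 3t^2 + 3t^4 + t^6$, which is precisely that of $S^2\times S^2\times S^2$. The remaining work is to show the three degree-four relations can be simultaneously diagonalized so that, after a linear change of basis, they become $x_1^2 = x_2^2 = x_3^2 = 0$ (equivalently $x_i x_j = 0$ for $i\ne j$ in the appropriate normalization), matching the ring of the triple product.

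The main obstacle I anticipate is precisely this linear-algebra normalization of the three quadrics. The degree-four relations span a three-dimensional subspace of the six-dimensional space $\Sym^2(H^2)$ of quadratic forms in $x_1,x_2,x_3$; I must argue that a rationally elliptic Poincar\'e-duality algebra with this Betti profile admits a basis in which the quotient ring is the exterior-type algebra of $S^2\times S^2\times S^2$. The cleanest route is to use that $H^*(M,\R)$ is a Poincar\'e-duality algebra generated in degree two with socle in degree six, together with the ellipticity condition, to conclude that the three relations form a complete intersection (a regular sequence), so that $\dim H^6 = 1$ and all higher symmetric powers vanish. Identifying $H^*(M,\R)$ with $\R[x_1,x_2,x_3]/\langle P_1,P_2,P_3\rangle$ for a regular sequence of quadrics and then invoking \lemref{good}-type reasoning would give formality and the claimed rational homotopy type; the delicate point is verifying that over $\R$ (or $\Q$) one may simultaneously reduce the three quadrics to the standard form $x_i^2$, which I expect to handle by a simultaneous-diagonalization argument exploiting that the relations must be independent and that their common zero locus is just the origin.
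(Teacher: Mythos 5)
Your first two paragraphs contain, in essence, the paper's entire proof: from $b_2(M)=3$ and Hurewicz you get $\rk\pi_2(M)=3$, the first inequality in~\eqref{ranks-formulas} kills all higher even ranks, the count $\dim\Ker m_2^{(4)}\ge 6-b_4=3$ gives $\rk\pi_3(M)\ge 3$, and the bound $\rk\pi_3(M)\le 3$ from~\eqref{ranks6} forces equality, $b_3(M)=0$, and the vanishing of all higher odd ranks. Since $S^2$ has $\rk\pi_2=\rk\pi_3=1$ and no other rational homotopy, this already establishes the lemma, which asserts only that the \emph{ranks of the rational homotopy groups} agree with those of $S^2\times S^2\times S^2$. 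You should stop there.

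The third paragraph, however, tries to prove a much stronger statement --- that $H^*(M,\R)$ is isomorphic as a ring to $\R[x_1,x_2,x_3]/\langle x_1^2,x_2^2,x_3^2\rangle$ --- and that statement is false. The three quadratic relations cannot in general be simultaneously reduced to $x_i^2=0$ by a linear change of the degree-two generators: the paper's own discussion immediately after this lemma exhibits Totaro's biquotients and a further infinite family, all with $b_2=3$, whose rational (indeed real) cohomology rings form infinitely many distinct isomorphism classes; for instance the rings $\R[x_1,x_2,x_3]/\langle x_1^2,\, x_2^2+b_1x_1x_2,\, x_3^2+c_1x_1x_3+c_2x_2x_3\rangle$ are generally not isomorphic to the ring of $(S^2)^3$ (one can see this by comparing the loci of square-zero degree-two classes). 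If your normalization worked, every such biquotient would have the real cohomology of the symmetric space $(S^2)^3$ and the paper's final theorem on non-geometric-formality ``for cohomological reasons'' would be vacuous. So the simultaneous-diagonalization step is not a delicate point to be handled later; it is an obstruction that cannot be overcome, and the lemma deliberately claims only the homotopy-group statement, not the ring structure.
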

\begin{proof}
The assumption that $b_{2}(M)=3$ and that $M$ is rationally elliptic implies that $\rk \pi _{2}(M)=3$, and $\rk \pi _{2k}(M)=0$ for $k\geq 2$.    We further obtain that $\im m_{2}^{(3)}=0$ and since $b_{4}(M)=3$ we must have $\dim \Ker m_{2}^{(4)}\geq 3$, what implies that $\rk \pi _{3}\geq 3$. It then follows from~\eqref{ranks} that $\rk \pi _{3}(M)=3$ and $\rk \pi _{2k+1}(M)=0$ for $k\geq 2$. Thus $M$ has the rational homotopy groups of $S^2\times S^2\times S^2$. 
\end{proof}

By the recent results obtained in~\cite{T} and~\cite{DV}  it follows that  a biquotinent $M$ which has the same rational homotopy groups as $S^2\times S^2\times S^2$ is diffeomorphic to the biquotient of $SU(2)\times SU(2)\times SU(2)$ by some free linear action of $S^1\times S^1\times S^1$. On the other hand the result of~\cite{DV}  says that any free linear action of $T^3$ on $(S^{3})^3$, up to reparametrization, is given by
\[
(u,v,w)\ast ((p_1,p_2),(q_1,q_2),(r_1,r_2)) = 
\]
\[
((up_1,u^{a_1}v^{a_2}w^{a_3}p_2),(vq_1,u^{b_1}v^{b_2}w^{b_3}q_2),(wr_1,u^{c_1}v^{c_2}w^{c_3}r_2)),
\]
where $a_1,b_2,c_3=\pm 1$ and the $2\times 2$ minors around the diagonal of the following matrix 
\[
\left[\begin{array}{ccc}
a_1 & a_2 & a_3\\
b_1 & b_2 & b_3\\
c_1 & c_2 & c_3
\end{array}\right]
\]
as well as the matrix itself have determinant $\pm 1$. 
It was shown in~\cite{T}  that the conditions on the entries of this matrix  are necessary and sufficient conditions for this action to be free. In~\cite{DV} all such matrices are classified and there are obtained, up to equivalences, three infinite families of matrices  and $12$ sporadic examples. The infinite families of matrices are:
\[
\left[\begin{array}{ccc}
1 & 2 & 0\\
1 & 1 & 0\\
c_1 & c_2 & 1
\end{array}\right],\;\;
\left[\begin{array}{ccc}
1 & 2 & a_3\\
1 & 1 & b_3\\
0 & 0 & 1
\end{array}\right],\;\;
\left[\begin{array}{ccc}
1 & 0 & 0\\
b_1 & 1 & 0\\
c_1 & c_2 & 1
\end{array}\right].
\]
The action of $T^3$ on $(S^{3})^{3}$ given by the second matrix produces Totaro's biquotients studied in~\cite{T} as an example  of family of $6$-manifolds with nonnegative sectional curvature, but with infinitely many distinct classes of rational cohomology rings.  For them, it is proved in~\cite{KT2}  not to be geometrically formal for cohomological reasons.

We prove here that the  family of biquotients given by the third matrix is not geometrically formal for cohomological reasons.
\begin{thm}
The biquotients $(S^{3})^3/T^3$ given by the action
\[
(u,v,w)\ast ((p_1,p_2),(q_1,q_2),(r_1,r_2)) = 
\]
\[
((up_1,up_2),(vq_1,u^{b_1}vq_2),(wr_1,u^{c_1}v^{c_2}wr_2)),
\]
are not geometrically formal.
\end{thm}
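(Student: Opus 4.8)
The plan is to compute the rational cohomology ring of the biquotient $M=(S^3)^3/T^3$ defined by the third family of matrices and then apply the criterion established in the proof of Theorem~\ref{main}. Since $M$ is a biquotient, it is rationally elliptic, and by the preceding lemma it has the rational homotopy groups of $S^2\times S^2\times S^2$, so $b_2(M)=b_4(M)=3$, $b_3(M)=0$, and $H^6(M,\R)$ is one-dimensional. Thus $H^*(M,\R)$ is generated by three degree-two classes $x,y,z$ subject to three independent degree-four relations (cutting $H^4$ down from dimension six to dimension three) plus the degree-six relations forced by Poincar\'e duality. The first concrete step is therefore to read off these relations explicitly from the action.

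To carry this out I would use the standard model for the cohomology of a biquotient $G /\!/ T$: the classifying-space fibration gives $H^*(M,\R)$ as $H^*(BT^3)\otimes H^*(B(S^3)^3)$ modulo the ideal generated by the difference of the pullbacks of the universal classes under the two homomorphisms $T^3\to (S^3)^3$ determined by the left and right factors of the action. Concretely, with $H^*(B(S^3)^3)=\R[p,q,r]$ (one degree-four class per $SU(2)$ factor) and $H^*(BT^3)=\R[x,y,z]$ (degree two), the matrix entries prescribe each $p,q,r$ as a product of two linear forms in $x,y,z$ coming from the two weights of the corresponding $S^3$. For the given action the two weights on the three factors are $(x,x)$, $(y,\,b_1x+y)$, and $(z,\,c_1x+c_2y+z)$, so the three degree-four relations are
\begin{equation}\label{biq-rels}
x^2=0,\qquad y(b_1x+y)=0,\qquad z(c_1x+c_2y+z)=0.
\end{equation}
The key observation is that the first relation gives a degree-two generator, namely $x$, whose square is zero.

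With \eqref{biq-rels} in hand the theorem follows almost immediately from the mechanism used to prove Theorem~\ref{main}. I would argue that $M$ cannot be geometrically formal for purely cohomological reasons: if it were, harmonic representatives $\omega_x,\omega_y,\omega_z$ of $x,y,z$ would satisfy the same relations pointwise, and in particular $\omega_x^2=0$ would force $\omega_x$ to have a nontrivial kernel everywhere, while the fact that $H^6$ is one-dimensional and spanned by a product involving $x$ (one checks from \eqref{biq-rels} that the top class is a nonzero multiple of $xyz$, since $x^2=y^2=z^2=0$ modulo lower cross terms) makes a volume form that also lies in the image of $i_v$, yielding the same contradiction $i_v(\text{vol})=0$ as in case (b) of Theorem~\ref{main}. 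The cleanest formulation is to exhibit, after a linear change of generators, a pair $\bar x,\bar y$ realizing the normal form \eqref{gen-rel}, so that the geometric-formality obstruction of Theorem~\ref{main} applies verbatim.

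The main obstacle I anticipate is not the final contradiction but the bookkeeping in deriving \eqref{biq-rels} correctly and then verifying that the resulting ring genuinely has the structure needed to invoke the obstruction — in particular, checking that the degree-six relation forced by Poincar\'e duality, together with \eqref{biq-rels}, leaves a one-dimensional top cohomology with $xyz\neq 0$, and confirming that $x^2=0$ survives rather than being an artifact of a bad choice of generators. I would handle this by computing the Hilbert series of the quotient ring $\R[x,y,z]/\langle x^2,\,y^2+b_1xy,\,z^2+c_1xz+c_2yz\rangle$ and confirming it equals $(1+t^2)^3$, which certifies both that the three relations in \eqref{biq-rels} are a regular sequence (so the ideal is Borel, consistent with Remark~\ref{good}) and that $\dim H^6=1$; once that is in place, the square-zero generator $x$ delivers the obstruction and no case analysis of the kind needed in Theorem~\ref{main} is required.
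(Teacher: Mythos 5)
Your setup is correct and matches the paper's: the degree-four relations you read off ($x^2=0$, $y^2=-b_1xy$, $z^2=-c_1xz-c_2yz$) are exactly the ones the paper works with, the regular-sequence/Hilbert-series check is sound, and the overall strategy (harmonic representatives satisfy the ring relations, then contradict the nondegeneracy of a volume form via an interior product) is the paper's. But the step where you actually produce the contradiction has a genuine gap. A square-zero degree-two generator is not, by itself, an obstruction: in Theorem~\ref{main} the existence of such a generator was precisely case~(a), where no obstruction arises, and the ring $\R[x,y,z]/\langle x^2,y^2,z^2\rangle$ is realized by the geometrically formal manifold $(S^2)^3$, so no contradiction can be extracted from relations that also hold there. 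Concretely, $i_{v}\omega_x=0$ does not give $i_{v}(\omega_x\omega_y\omega_z)=0$, because $i_v$ is a derivation and the other two factors contribute; so ``the same contradiction $i_v(\mathrm{vol})=0$'' does not follow from $x^2=0$ alone. Moreover $x$ cannot play the role of $\bar x$ in the normal form \eqref{gen-rel}, since $x^3=0$ in this ring, whereas the argument of Theorem~\ref{main}, case (b), requires $\bar x^{\,3}$ to be a volume form.

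The missing idea is to manufacture, out of the cross terms, a single degree-two class whose cube is the top class but whose square is a multiple of a product of square-zero classes. The paper does this by completing squares twice: $\tilde\omega_2=\omega_2+\tfrac{b_1}{2}\omega_1$ satisfies $\tilde\omega_2^{\,2}=0$, and then $\tilde\omega_3=\omega_3-\tfrac12(\tfrac{b_1}{2}c_2-c_1)\omega_1+\tfrac12\tilde\omega_2$ satisfies $\tilde\omega_3^{\,2}=-\tfrac{c_2}{2}(\tfrac{b_1}{2}c_2-c_1)\,\omega_1\tilde\omega_2$. Since $\omega_1$ and $\tilde\omega_2$ each have pointwise kernel of dimension at least four, they share a kernel vector $v$; then $i_v(\tilde\omega_3^{\,2})=0$ forces $i_v(\tilde\omega_3)\tilde\omega_3=0$ and hence $i_v(\tilde\omega_3^{\,3})=0$, contradicting the fact that $\tilde\omega_3^{\,3}$ is a nonzero multiple of the volume form $\omega_1\tilde\omega_2\tilde\omega_3$. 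Your fallback plan --- exhibiting a pair $\bar x,\bar y$ in the normal form \eqref{gen-rel} --- can be made to work and amounts to essentially the same computation (the class whose cube is the volume class must involve $z$, e.g.\ $z+\tfrac{c_1}{2}x+\tfrac{c_2}{2}y$), but you have not carried it out, and it is precisely where the content of the proof lies. Note also that either route needs the coefficient $c_2(\tfrac{b_1}{2}c_2-c_1)$ to be nonzero; when it vanishes the ring degenerates to that of $(S^2)^3$ and the cohomological obstruction disappears, a genericity condition that is left implicit in the paper as well.
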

\begin{proof}
Using the standard techniques~\cite{T} for computing cohomology rings one deduce~\cite{DV} that any  biquotient $M$ obtained by the given action has the following integral cohomology structure :
\[
H^{*}(M)=\Z [x_1,x_2,x_3]/\left\langle x_1^2=0,x_2^2=-b_1x_1x_2, x_3^2=-c_1x_1x_3-c_2x_2x_3\right\rangle.
\]
Let us assume that $M$ admits a formal metric.  Denote by $\omega _{1}, \omega _{2}$ and $\omega _{3}$  the corresponding harmonic  representatives  of the   
real cohomology generators $x_1,x_2$ and $x_3$. Then these harmonic forms will satisfy  the relations which hold in  $H^{*}(M,\R)$ between $x_1,x_1$ and $x_3$:
\begin{equation}\label{relhar}
\omega _{1}^{2}=0,\;\; \omega _{2}^2=-b_1\omega _{1}\omega _{2},\;\; \omega _{3}^2 = -c_1\omega _{1}\omega _{3} - c_2\omega_{2}\omega _{3}.
\end{equation}
It implies that $(\frac{b_1}{2}\omega _{1}+\omega _{2})^2=0$ and we further consider the harmonic form ${\tilde \omega _{2}}=\omega _{2}+\frac{b_1}{2}\omega _{1}$ which satisfies  ${\tilde \omega _{2}}^2=0$. Thus
$\omega _{3}^2=(\frac{b_1}{2}c_{2}-c_{1})\omega _{1}\omega _{3}-c_{2}{\tilde \omega _{2}}\omega _{3}$ and we consider  the form
\[
{\tilde \omega _{3}}=\omega _{3}-\frac{1}{2}(\frac{b_1}{2}c_2-c_1)\omega _{1}+\frac{1}{2}{\tilde \omega _{2}},
\]
whose square satisfies
\begin{equation}\label{three}
{\tilde \omega _{3}}^{2}=-\frac{c_2}{2}(\frac{b_1}{2}c_2-c_1)\omega _{1}{\tilde \omega _{2}}.
\end{equation}
Therefore the form $\omega _{1}{\tilde \omega _{2}}{\tilde \omega _{3}}$ is a volume form on $M$ and such is 
${\tilde \omega _{3}}^3=-\frac{c_2}{2}(\frac{b_1}{2}c_2-c_1)\omega _{1}{\tilde \omega _{2}}{\tilde \omega _{3}}$ as well.

The kernel foliations for the forms $\omega _{1}$ and ${\tilde \omega _{2}}$ are each of dimension at least four, so they have  a common vector field $v$ (more precisely at least two) meaning that $i_{v}(\omega _{1})=i_{v}({\tilde \omega _{2}})=0$. Together with~\eqref{three} it implies 
\[
i_{v}({\tilde \omega _{3}}^2)=2i_{v}({\tilde \omega _{3}}){\tilde \omega _{3}} = 0
\] 
and, thus,
\[
i_{v}({\tilde \omega _{3}}^3)=3i_{v}({\tilde \omega _{3}}){\tilde \omega _{3}}^2 = 0,
\]
what is in contradiction with the fact that ${\tilde \omega _{3}}^3$ is a volume form on $M$.
\end{proof}
\bibliographystyle{book}

\end{document}